\numberwithin{equation}{section}
\newtheorem{theorem}{Theorem}
\newtheorem{lemma}{Lemma}
\newtheorem{corollary}{Corollary}
\newtheorem{proposition}{Proposition}
\newtheorem{remark}{Remark}
\numberwithin{theorem}{section}
\numberwithin{corollary}{section}
\numberwithin{lemma}{section}
\numberwithin{definition}{section}
\numberwithin{proposition}{section}
\numberwithin{remark}{section}
\newcommand{\R}{\mathbb R}
\newcommand{\N}{\mathbb N}
\newcommand{\medint}{-\kern  -,375cm\int}
\title[Optimal Poincar\'e inequalities]{Best constants in Poincar\'e inequalities for convex domains.}
\author[L. Esposito - C. Nitsch  - C. Trombetti]{L. Esposito$^\dag$ - C. Nitsch$^*$    - C. Trombetti$^*$}
 \thanks{%
$^*$ Dipartimento di Matematica e Applicazioni ``R. Caccioppoli'', Universit\`{a}
degli Studi di Napoli Federico II, Complesso Monte S. Angelo, via Cintia
- 80126 Napoli, Italy; \\ email: c.nitsch@unina.it;
cristina@unina.it\\
\indent$^\dag$ Dipartimento di Matematica, Universit\`{a}
degli Studi di Salerno,Via Ponte Don 
Melillo, I-84084 Fisciano (SA), Italy.\\
email: luesposi@unisa.it}
\keywords{Poincar\'e inequality, $p$-Laplacian eigenvalues, Neumann boundary conditions}
\begin{document}

\maketitle

\begin{abstract}
We prove a Payne-Weinberger type inequality for the $p$-Laplacian Neumann eigenvalues ($p\ge 2$). The inequality provides the sharp upper bound on convex domains, in terms of the diameter alone, of the best constants in Poincar\'e inequality. The key point is the implementation of a refinement of the classical P\'olya-Szeg\"o inequality for the symmetric decreasing rearrangement which yields an optimal weighted Wirtinger inequality.
\end{abstract}

\section{Introduction}
Let $\Omega$ be an open bounded Lipschitz connected subset in $\R^n$. The first nontrivial Neumann eigenvalue $\mu_p$ for the $p$-Laplacian equation ($p>1$)
\begin{equation}
\left\{\begin{array}{ll}
-{\rm div}(|Du|^{p-2}Du)=\mu_p |u|^{p-2}u& \mbox{in $\Omega$}\\\\
\dfrac{\partial u}{\partial \nu}=0 & \mbox{on $\partial \Omega$}
\end{array}\right .
\end{equation}
can be characterized by

\begin{equation}\label{eq_quoz}
\mu_p=\mathop{\min_{u\in W^{1,p}(\Omega)}}_{\int_\Omega|u|^{p-2}u=0} \dfrac{\displaystyle\int_\Omega |Du|^p}{\displaystyle\int_\Omega |u|^p}
\end{equation}

It is well known that (see for instance \cite{Ma}), for any given open bounded Lipschitz connected $\Omega$, a Poincar\'e inequality holds true, in the sense that there exists a constant $C_{\Omega,p}$ such that
\begin{equation}\label{eq_poinc}
\inf_{t\in \R} \|u-t\|_{L^p(\Omega)}\le C_{\Omega,p} \|Du\|_{L^p(\Omega)},
\end{equation}
for any $u\in W^{1,p}(\Omega)$.

The value of the best constant in \eqref{eq_poinc} is
 $$C_{\Omega,p}=\mu_p^{-1/p}.$$

In \cite{pawe}, and more recently in \cite{be}, it has been proved that, if $p=2$, $\Omega$ is convex, in any dimension
\begin{equation}\label{eq_classica}
\frac{1}{C_{\Omega,2}}=\sqrt{\mu_2} \ge  \frac{\pi}{d},
\end{equation}
where $d$ is the diameter of $\Omega$.
Observe that the right hand side of \eqref{eq_classica} is exactly the value achieved,  in dimension $n=1$ on any interval of length $d$, by the first nontrivial Laplacian eigenvalue (without distinction between the Neumann and the Dirichlet problems).

The proof of \eqref{eq_classica} in \cite{pawe, be} indeed relies on the reduction to a one dimensional problem. At this aim, for any given smooth admissible test function  $u$ in the right hand side of \eqref{eq_quoz}, the authors show that it is possible to perform a clever slicing of the domain $\Omega$ in convex sets which are as tiny as desired in at least $n-1$ orthogonal directions. On each one of such convex components of $\Omega$, they are able to show that the Rayleight quotient of $u$ can be approximated by a 1-dimensional weighted Rayleight quotient. This leads the authors to look for the best constants of a class of one dimensional weighted Poincar\'e-Wirtinger inequalities.

However the technique that they use strongly relies on the linearity of the Laplace operator (in particular the authors need the property that derivatives of solutions to homogeneous differential equation are still solutions to the same equation). For this reason the proof can not be generalized to $p\ne 2$ in a straightforward way, and to our knowledge the only other case which up to now have been investigated is the limit case $p=1$ (see \cite{acdur}).
 
The main result of this paper is actually to prove that
\begin{theorem}[Main Theorem]\label{teo_main}
Let $\Omega\subset\R^n$ be a bounded convex set having diameter $d$. For $p\ge 2$ and in any dimension we have $$\mu_p \ge  \left(\frac{\pi_p}{d}\right)^p$$
where 
\begin{equation}\label{eq_pp}
\pi_p=2\int_0^{(p-1)^{1/p}}\frac{dt}{(1-t^p/(p-1))^{1/p}}=2\pi\frac{(p-1)^{1/p}}{p(\sin(\pi/p))}
\end{equation}
\end{theorem}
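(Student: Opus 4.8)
The plan is to reduce the $n$-dimensional eigenvalue problem to a family of one-dimensional weighted problems, following the slicing strategy of Payne–Weinberger but in a way that does not invoke the linearity of the operator. Given an admissible test function $u$ with $\int_\Omega |u|^{p-2}u=0$, I would bisect $\Omega$ repeatedly by hyperplanes that split both the volume and the $L^p$-weighted mass of $u$ in a balanced way, producing a partition of $\Omega$ into convex cells that are thin in $n-1$ directions and elongated only along one axis. On each such thin cell the gradient energy $\int |Du|^p$ and the $L^p$-norm $\int |u|^p$ should be comparable to the corresponding one-dimensional integrals taken along the long axis, with a weight $w(t)$ that encodes how the $(n-1)$-dimensional cross-sectional measure varies. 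Convexity forces this weight to be concave to a suitable power, i.e. $w^{1/(n-1)}$ concave, which is the structural fact one exploits.

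\medskip

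The core of the argument is therefore a sharp \emph{weighted} one-dimensional Poincar\'e–Wirtinger inequality: for $v$ on an interval $(a,b)$ of length at most $d$, with a weight $w$ satisfying the concavity induced by convex slicing and with the balancing condition $\int_a^b |v|^{p-2} v\, w\,dt = 0$, one wants
\begin{equation*}
\int_a^b |v'|^p\, w\, dt \ge \left(\frac{\pi_p}{d}\right)^p \int_a^b |v|^p\, w\, dt .
\end{equation*}
The constant $\pi_p$ defined in \eqref{eq_pp} is exactly the first nonzero eigenvalue of the one-dimensional $p$-Laplacian on an interval of length $d$ with the extremal (constant) weight, so the inequality is sharp and degenerates to the correct value. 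I would establish this by a rearrangement argument: the announced refinement of the P\'olya–Szeg\"o inequality for the symmetric decreasing rearrangement should show that replacing $v$ and $w$ by appropriate rearranged profiles can only decrease the gradient term relative to the potential term, reducing matters to the worst-case weight, for which the Euler–Lagrange equation is the one-dimensional $p$-Laplacian eigenvalue equation whose first eigenvalue is $(\pi_p/d)^p$.

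\medskip

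The main obstacle, and the reason $p=2$ was special in \cite{pawe, be}, is the step connecting the $n$-dimensional energy on a thin cell to the one-dimensional weighted energy along its axis. For the linear case one differentiates and uses that derivatives solve the same equation; for general $p\ge 2$ this is unavailable, so I expect the delicate point to be controlling the transverse contribution to $|Du|^p$ and showing that it can be made negligible as the cells are refined, uniformly in $p$. I would handle this by a careful convexity/Hölder estimate on each cell, using $p\ge 2$ to dominate cross terms, and then pass to the limit in the partition. Assuming the refined P\'olya–Szeg\"o inequality does the heavy lifting for the one-dimensional reduction, the remaining work is to verify that the balancing condition survives the slicing (so that $v$ is admissible in the weighted inequality on each cell) and to sum the cellwise estimates back up to recover $\mu_p \ge (\pi_p/d)^p$.
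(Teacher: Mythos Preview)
Your overall architecture matches the paper: slice $\Omega$ into thin convex cells preserving $\int |u|^{p-2}u=0$ on each, approximate the Rayleigh quotient on each cell by a one-dimensional weighted quotient (weight $=$ cross-sectional measure, log-concave by Brunn--Minkowski), apply a weighted Wirtinger inequality, and sum. But you have misplaced the difficulty. The passage from a thin cell to the one-dimensional integral is \emph{not} where $p\ge2$ is needed, and it requires no ``convexity/H\"older estimate to dominate cross terms'': for a smooth $u$ with bounded second derivatives on a cell of transverse width $\varepsilon$, the $n$-dimensional integrals differ from their 1D counterparts by $O(\varepsilon|\Omega_i|)$ by plain Taylor expansion, for any $p>1$. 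The paper disposes of this step in a few lines.

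The genuine content, and the only place $p\ge2$ enters, is the one-dimensional weighted Wirtinger inequality, and here your sketch (``rearrange $v$ and $w$ to reach the worst-case weight'') misses the mechanism. The weight is never rearranged. The paper instead (i) uses a comparison argument on the Euler--Lagrange ODE to replace a general log-concave weight $f=e^h$ by the exponential $e^{\kappa x}$, with $\kappa=h'(x_0)$ taken at the zero of the minimizer; (ii) shows that for exponential weights the first Neumann and first Dirichlet eigenvalues coincide (the constant-coefficient ODE is translation invariant, so one can glue rescaled pieces of the Neumann eigenfunction into a signed Dirichlet eigenfunction); (iii) in the Dirichlet problem with weight $e^{\kappa x}$, substitutes $v=u\,e^{\kappa x/p}$, so that $\int e^{\kappa x}|u'|^p=\int|v'-\tfrac{\kappa}{p}v|^p$ while $\int e^{\kappa x}|u|^p=\int|v|^p$. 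The refined P\'olya--Szeg\H{o} inequality is precisely the statement $\int|v'+c\,v|^p\ge\int|(v^\sharp)'|^p$ for compactly supported $v$, and its proof uses convexity of $t\mapsto t^{p-1}$; \emph{that} is where $p\ge2$ is required. Without steps (i)--(ii) there is no evident way to bring the rearrangement to bear on a general log-concave weight, so your plan as written has a gap at exactly the point where the paper's new idea lives.
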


We observe that $\displaystyle \left(\frac{\pi_p}{d}\right)^p$ for $n=1$ is the $p$-Laplacian eigenvalue on $[0,d]$, without distinction between the Neumann and the Dirichlet boundary conditions (see for instance \cite{drab}, \cite{rewa})

The formula \eqref{eq_pp} for $\pi_p$ can be found for instance in \cite{ drab, li, rewa, sta1, sta2}. The fact that $\pi_2=\pi$ is consistent with the classical Wirtinger inequality (see \cite{HL}) and obviously also with \eqref{eq_classica}.

The result stated in Theorem \ref{teo_main} is the consequence of the following estimate on the best constant in a class of weighted Wirtinger inequalities (for an insight into weighted Wirtinger inequalities, and more generally into weighted Hardy inequalities, we refer to \cite{kp,Ma,tur}).
\begin{proposition}\label{pr_wirtinger}
Let $f$ be a nonnegative log-concave function defined on $[0,L]$ and $p\ge 2$
then
\begin{equation}\label{eq_wirtinger}
\mathop{\inf_{u\in W^{1,p}(0,L)}}_{\int_0^L f|u|^{p-2}u=0} \frac{\displaystyle \int_0^L f(x)|u'(x)|^p dx}{\displaystyle\int_0^L f(x)|u(x)|^p dx}\ge\mathop{\min_{u\in W^{1,p}(0,L)}}_{\int_0^L |u|^{p-2}u=0} \frac{\displaystyle \int_0^L |u'(x)|^p dx}{\displaystyle\int_0^L |u(x)|^p dx}=\left(\frac{\pi_p}{L}\right)^p.
\end{equation}
\end{proposition}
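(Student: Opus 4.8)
The plan is to turn the weighted quotient into an unweighted one by a measure–preserving change of variables, and then to control the resulting \emph{concave} weight by a rearrangement (Pólya–Szegő type) argument. Since $f$ is log-concave I may assume $f>0$ on $[0,L]$, approximating $f$ by $f+\eps$ and letting $\eps\to0$ if necessary. First I would show that the infimum in \eqref{eq_wirtinger} is attained: a minimizer $u$ exists by the direct method and, after normalization, solves $-(f|u'|^{p-2}u')'=\mu\,f|u|^{p-2}u$ on $(0,L)$ with the natural conditions $u'(0)=u'(L)=0$. The constraint $\int_0^L f|u|^{p-2}u=0$ forces $u$ to change sign, and integrating the equation from either endpoint shows that $f|u'|^{p-2}u'$ is monotone on each side of a zero and vanishes at the endpoints; this forces $u'\le0$ on all of $(0,L)$, so $u$ is monotone decreasing with exactly one interior zero.

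Next I would perform the change of variables $y=F(x):=\int_0^x f$, which maps $[0,L]$ onto $[0,M]$ with $M=\int_0^L f$, and set $v=u\circ F^{-1}$ and $\tilde f=f\circ F^{-1}$. Because $dy=f\,dx$, both the denominator and the side constraint become unweighted, namely $\int_0^L f|u|^p\,dx=\int_0^M|v|^p\,dy$ and $\int_0^M|v|^{p-2}v\,dy=0$, while from $u'=(v'\circ F)\,f$ the numerator becomes $\int_0^L f|u'|^p\,dx=\int_0^M\tilde f^{\,p}|v'|^p\,dy$. The decisive gain is that log-concavity of $f$ is converted into ordinary concavity of $\tilde f$: since $\tilde f'(y)=(\log f)'(F^{-1}(y))$ is nonincreasing, one has $\tilde f''\le0$; moreover $\int_0^M\tilde f^{-1}\,dy=\int_0^L dx=L$. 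Thus Proposition \ref{pr_wirtinger} becomes equivalent to the assertion that
\[ \int_0^M\tilde f^{\,p}\,|v'|^p\,dy\ \ge\ \Big(\frac{\pi_p}{L}\Big)^p\int_0^M|v|^p\,dy \]
holds for every concave $\tilde f\ge0$ on $[0,M]$ normalized by $\int_0^M\tilde f^{-1}=L$ and every admissible $v$.

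For this reduced inequality the plan is to show that, among all concave weights with $\int_0^M\tilde f^{-1}$ prescribed, the \emph{constant} weight $\tilde f\equiv M/L$ minimizes the first constrained eigenvalue; this is precisely where the refined Pólya–Szegő, symmetric–decreasing–rearrangement machinery enters. Granting it, the extremal (constant–weight) problem is the unweighted one on $[0,M]$, whose value is $(M/L)^p(\pi_p/M)^p=(\pi_p/L)^p$, attained by the generalized cosine $\cos_p(\pi_p\,y/M)$, the explicit constant being read off from the definition \eqref{eq_pp} of $\pi_p$. Equality then propagates back through the change of variables to the constant weight, consistently with the unweighted one–dimensional extremal.

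The hard part will be this last comparison, i.e. the sharp weighted Pólya–Szegő inequality for a concave weight with the \emph{precise} constant $(\pi_p/L)^p$. Here the restriction $p\ge2$ is expected to be essential: it is exploited through the convexity and $C^2$ regularity of $t\mapsto|t|^p$, which furnishes the quantitative second–order estimate that, combined with the concavity of $\tilde f$, gives the rearrangement inequality the correct sign; for $1<p<2$ these estimates degenerate. The hypothesis of log-concavity (concavity of $\tilde f$) is likewise genuinely needed and not merely technical: for a weight allowed to decay fast near an endpoint one can send the quotient below any fixed constant, so no universal bound survives without such a structural assumption on $f$.
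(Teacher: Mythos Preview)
Your change of variables $y=F(x)=\int_0^x f$ is correct and the reformulation is valid: it converts the log-concave weight into a concave weight $\tilde f$ on $[0,M]$ with $\int_0^M\tilde f^{-1}=L$, and the target inequality becomes exactly what you write. But at this point you stop proving and start asserting. The claim that ``among all concave weights with $\int_0^M\tilde f^{-1}$ prescribed, the constant weight minimizes the first constrained eigenvalue'' is, in these new coordinates, precisely the proposition itself, and you offer no mechanism for it beyond the phrase ``refined P\'olya--Szeg\"o machinery.'' A symmetric decreasing rearrangement of $v$ does not obviously interact well with the factor $\tilde f^{\,p}$ in the numerator (which is not being rearranged), and concavity of $\tilde f$ alone gives no monotonicity or symmetry of $\tilde f$ to exploit; so it is unclear what rearrangement you intend, of which function, and why it would produce the sharp constant. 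The monotonicity of the minimizer that you establish in the first paragraph is never used again.

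The paper does not attack the general log-concave (equivalently, your concave) weight directly. Instead it (i)~first reduces the log-concave $f$ to an \emph{exponential} weight $e^{\kappa x}$ for one specific $\kappa$, by a comparison/Sturm argument for the eigenvalue ODE at the unique zero of the minimizer (Lemma~\ref{lem_kappa}); (ii)~for exponential weights, exploits translation invariance to show that the constrained Neumann eigenvalue equals the Dirichlet eigenvalue (Lemma~\ref{lem_allkappa}); and (iii)~for the Dirichlet problem with weight $e^{\kappa x}$, sets $v=u\,e^{\kappa x/p}$ so that the numerator becomes $\int_0^L|v'-\tfrac{\kappa}{p}v|^p$ with \emph{no} weight, and then applies the paper's new one-dimensional inequality (Proposition~\ref{pr_polya}): for $p\ge2$ and compactly supported $v$, $\int_{\R}|v'+\kappa v|^p\ge\int_{\R}|(v^\sharp)'|^p$. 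This last inequality is the ``refined P\'olya--Szeg\"o'' you allude to, but it becomes applicable only \emph{after} the reduction to exponential weights, not for the general concave $\tilde f$ you are left with. Your outline omits exactly the reduction (i)--(ii) that makes the rearrangement step usable, and without it the argument has a genuine gap at its core.
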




One of the main tools in the proof of Proposition \ref{pr_wirtinger} is a refinement of the P\'olya-Szeg\"o principle which, to our knowledge, is completely new. 
We provide such a proof in section \ref{sec_polya} after recalling the notion of symmetric decreasing rearrangement and a short list of its main properties. Thereafter we also explain, through a simple example, to which extent our result can not be deduced by the classical one. 

Next, in section \ref{sec_wirtinger}, we prove the weighted Wirtinger inequality stated in Proposition \ref{pr_wirtinger} which is obtained piling up Lemma \ref{lem_kappa}, Lemma \ref{lem_allkappa} and Lemma \ref{lem_fund}. Finally, in section \ref{sec_red}, we show that Proposition \ref{pr_wirtinger} implies Theorem \ref{teo_main}. This last proof is given for the seek of completeness but it can be considered an adaptation when $p>2$ of the proof considered when $p=2$ in \cite{pawe} and later in \cite{be} (see also \cite{acdur} for the $1$-Laplacian).

\section{A P\'olya-Szeg\"o type inequality}\label{sec_polya}

The aim of this section is the proof of a one dimensional P\'olya-Szeg\"o type principles.
Therefore, first of all, we recall the notion of symmetric decreasing rearrangement. Let $u: \R \rightarrow \R$ be a measurable function, 
denoted by $\mu_u(t)={\mathcal L}^1\{x\in\R:|u(x)|>t\}$  the distribution function of $u$,
the symmetric decreasing rearrangement of $u$ is the function 
$$u^\sharp(x)=\left[
\begin{array}{ll}
\displaystyle\sup_{\R} u& x=0\\\\
\inf\{t\ge 0:\mu_u(t)<2|x|\}&x\ne0
\end{array}\right .$$
As an immediate consequence of its definition, the symmetric decreasing rearrangement $u^\sharp$ and the function $u$ are equimeasurable, hence,
if $u\in L^p(\R)$ then $$\|u\|_p=\|u^\sharp\|_p.$$
Another useful and not so obvious property is the well known P\'olya-Szeg\"o principle, according to which if $u\in W^{1,p}(\R)$ is compactly supported, then $$\|u'\|_p\ge\|u^\sharp{'}\|_p.$$
Under the same hypothesis on the function $u$, it is possible to prove that if $$G(s,t): (s,t)\in\R^+\times\R^+\to \R$$ is a measurable nonnegative function, convex and nondecreasing in $t$, then
$$\int_\R G(|u|,|u'|)\ge\int_\R G(u^\sharp,|u^{\sharp}{'}|)$$

For more details on rearrangements see \cite{kaw}.\\

In the following we shall prove an inequality that reminds the P\'olya-Szeg\"o principle but to our knowledge is not covered by any known result in the literature.
\begin{proposition}\label{pr_polya}
Let $u$ be in $W^{1,p}(\R)$ ($p\ge 2$) a compactly supported function. Then for all real $\kappa$
$$\int_\R|u'(x)+\kappa u(x)|^pdx\ge\int_\R|u^\sharp {'}(x)|^p dx.$$
\end{proposition}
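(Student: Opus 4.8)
The plan is to reduce the statement, by a coarea slicing, to a finite-dimensional inequality at each level set, and then to analyze that inequality by convexity, the analytic core being a single scalar estimate.

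First I would reduce to the case $u\ge 0$. Since $u\in W^{1,p}(\R)$ we have $|u|\in W^{1,p}(\R)$ with $(|u|)'=\mathrm{sgn}(u)\,u'$ a.e.; thus on $\{u>0\}$ one has $(|u|)'+\kappa|u|=u'+\kappa u$, on $\{u<0\}$ one has $(|u|)'+\kappa|u|=-(u'+\kappa u)$, and on $\{u=0\}$ both $u'$ and $(|u|)'$ vanish a.e. Hence $|(|u|)'+\kappa|u||=|u'+\kappa u|$ a.e., so the left-hand side is unchanged by replacing $u$ with $|u|$; as $u$ and $|u|$ share the same distribution function they share the rearrangement $u^\sharp$, so it suffices to treat $u\ge 0$, compactly supported. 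Next I would slice by the coarea formula. For a.e. level $t\in(0,M)$, $M=\max u$, let $x_i$ be the finitely many solutions of $u(x_i)=t$, put $a_i=|u'(x_i)|$ and $s_i=\mathrm{sgn}\,u'(x_i)$. Then
\[
\int_\R|u'+\kappa u|^p\,dx=\int_0^M\Big(\sum_{u(x)=t}\frac{|u'(x)+\kappa t|^p}{|u'(x)|}\Big)\,dt,
\]
while, since $u^\sharp$ is symmetric decreasing and $-\tfrac{d}{dt}\mathcal{L}^1\{u>t\}=\sum_{u(x)=t}1/|u'(x)|$,
\[
\int_\R|u^\sharp{}'|^p\,dx=\int_0^M\frac{2^p}{\big(\sum_{u(x)=t}1/|u'(x)|\big)^{p-1}}\,dt.
\]
Crucially, for $u\ge 0$ the points at a regular level split into equally many ascending ($u'>0$) and descending ($u'<0$) ones. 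Everything thus reduces to the finite inequality: for $a_1,\dots,a_{2m}>0$ carrying $m$ signs $+$ and $m$ signs $-$, and every $c\in\R$ (here $c=\kappa t$),
\[
\sum_{i=1}^{2m}\frac{|s_ia_i+c|^p}{a_i}\ \ge\ \frac{2^p}{\big(\sum_{i=1}^{2m}1/a_i\big)^{p-1}}.
\]
(The case $c=0$ is just the pointwise form of the classical P\'olya--Szeg\"o inequality.)

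To prove this finite inequality I would first collapse each sign class to a single slope. Fix $c$ and set $w_i=1/a_i$; the summand of an ascending (resp. descending) slope equals $w_i\,\psi(1/w_i)$ with $\psi(s)=|s+c|^p$ (resp. $|s-c|^p$), and each such $\psi$ is convex. Since $w\mapsto w\,\psi(1/w)$ is the restriction to $v=1$ of the perspective $(w,v)\mapsto w\,\psi(v/w)$, it is convex in $w$. By Jensen, within each class of $m$ elements the sum is minimized when all $w_i$ are equal, and this operation leaves $\sum 1/a_i$ (hence the right-hand side) unchanged. Writing $\alpha,\beta$ for the resulting common ascending/descending slopes, so that $\sum 1/a_i=m(1/\alpha+1/\beta)$, we obtain
\[
\sum_{i=1}^{2m}\frac{|s_ia_i+c|^p}{a_i}\ \ge\ m\Big(\frac{|\alpha+c|^p}{\alpha}+\frac{|\beta-c|^p}{\beta}\Big).
\]
Because $m^p\ge 1$, the whole matter reduces to the two-slope ($m=1$) case.

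Finally, for the two-slope inequality I would minimize over $c$. Since $p\ge 2$, the function $c\mapsto \frac{|\alpha+c|^p}{\alpha}+\frac{|\beta-c|^p}{\beta}$ is $C^1$, strictly convex and coercive; its unique critical point lies in $(-\alpha,\beta)$, and a direct computation gives the minimal value $\frac{(\alpha+\beta)^p}{\beta(1+\rho)^{p-1}}$ with $\rho=(\alpha/\beta)^{1/(p-1)}$. Multiplying by $(1/\alpha+1/\beta)^{p-1}$ and substituting $\alpha/\beta=\sigma^{p-1}$, the required bound becomes the one-variable inequality
\[
(\sigma^{p-1}+1)^{2p-1}\ \ge\ 2^p\,\sigma^{(p-1)^2}(1+\sigma)^{p-1},\qquad \sigma>0,
\]
an equality at $\sigma=1$ (which corresponds to $\alpha=\beta$, $c=0$). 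I expect the main obstacle to be exactly this scalar estimate: establishing it for all $\sigma>0$ and all $p\ge 2$ (for instance by taking logarithms and verifying that $\sigma=1$ is the global minimum, the case $p=2$ collapsing to $(\sigma-1)^2\ge 0$). By comparison the reduction to $u\ge 0$, the coarea slicing, and the convexity collapse are soft; the hypothesis $p\ge 2$ enters both through the convexity exploited above and, decisively, through this final scalar inequality.
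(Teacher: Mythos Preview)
Your approach is essentially the paper's: reduce to $u\ge 0$, slice by coarea, and prove a pointwise inequality at each level. The paper first approximates by piecewise linear functions (so each level set is automatically finite), pairs each ascending slope with a descending one, proves the two-slope estimate
\[
\frac{|\alpha+c|^p}{\alpha}+\frac{|\beta-c|^p}{\beta}\ \ge\ 2^p\Big(\frac{\alpha\beta}{\alpha+\beta}\Big)^{p-1},
\]
and then collapses the $m$ pairs via the convexity of $t\mapsto t^{1-p}$. You instead first collapse within each sign class by perspective/Jensen and then reduce (using $m^p\ge 1$) to the same two-slope estimate; either order works.

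What you flag as the ``main obstacle'' is in fact not one. Your scalar inequality $(\sigma^{p-1}+1)^{2p-1}\ge 2^p\,\sigma^{(p-1)^2}(1+\sigma)^{p-1}$ is exactly the displayed two-slope estimate after the substitution $\alpha/\beta=\sigma^{p-1}$, and it falls in two lines: convexity of $t\mapsto t^{p-1}$ for $p\ge 2$ gives $(1+\sigma)^{p-1}\le 2^{p-2}(1+\sigma^{p-1})$, after which it remains to show $(1+\sigma^{p-1})^{2(p-1)}\ge 2^{2(p-1)}\sigma^{(p-1)^2}$, i.e.\ $(1+\sigma^{p-1})^2\ge 4\sigma^{p-1}$, which is AM--GM. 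Equivalently, after minimizing in $c$ the paper writes the minimum as $(\alpha+\beta)^p\big/(\alpha^{1/(p-1)}+\beta^{1/(p-1)})^{p-1}$ and applies the same two ingredients. So your outline is complete once you insert this short argument; no logarithmic analysis of the one-variable function is needed.
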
 
\begin{proof}
We can always assume that $u$ is nonnegative, if not we consider its modulus. Moreover we suppose that $u$ is a simple function (i.e.: continuous and piecewise linear). The general case can be deduced by approximation arguments.

We decompose the set $[0,\max u]$ in the union of finitely many intervals $(a_i,a_{i+1})$, $i=0,...,N$ $(N\in\N)$ such that $a_i<a_{i+1}$. Denoting by $D_i=\{x\in \R: a_i < u(x)< a_{i+1}\}$, then we chose $a_i$ such that $D_i$ is the union of an even number of open intervals, namely $X_{i,j}$ (with $j=1,...,2M_j$ and $M_j\in\N$) in each of which $u$ is linear and non constant.

We also denote by $u'_{i,j}$ the constant value that the derivative of $u$ takes on $X_{i,j}$ and, without loss of generality, we can assume $u'_{i,j}=(-1)^{j+1}|u'_{i,j}|$

Taking into account the definition of symmetric rearrangement,  $u^\sharp$ is simple when $u$ is simple. Moreover,  denoting by $D^\sharp_i=\{x\in \R: a_i < u^\sharp(x)< a_{i+1}\},$ equimeasurability of $u$ and $u^\sharp$ implies 
$$ \frac{2}{|u^\sharp{'}(x)|} =-\mu '_u(t)=\sum_{j=1}^{2M_j} \frac{1}{|u'_{i,j}|} \qquad  x\in D^\sharp_i,\quad a_i<t<a_{i+1}.$$ 

Now we can use the coarea formula to get

\begin{align*}
\displaystyle \int_\R|u'(x)+\kappa u(x)|^pdx&\ge\displaystyle\int_{\R\cap \{u'\ne0\}}|u'(x)+\kappa u(x)|^pdx
\\
&\displaystyle
=\int_{0}^{\infty}\left\{\int_{\{u=t\}}\frac{|u'+\kappa t|^p}{|u'|} d{\mathcal H}^0\right\}dt
=\sum_{i=0}^N\int_{a_i}^{a_{i+1}}\left(\sum_{j=1}^{2M_j}\frac{|u'_{i,j}+\kappa t|^p}{|u'_{i,j}|} \right)dt\\
&\displaystyle= \sum_{i=0}^N\int_{a_i}^{a_{i+1}}\left(\sum_{j=1}^{M_j}\frac{\left||u'_{i,2j-1}|+\kappa t\right|^p}{|u'_{i,2j-1}|} 
+\frac{\left||u'_{i,2j}|-\kappa t\right|^p}{|u'_{i,2j}|} \right)dt\\
&\displaystyle\ge\sum_{i=0}^N\left[2^p(a_{i+1}-a_i)\sum_{j=1}^{M_j} \left( \frac {|u'_{i,2j-1}u_{i,2j}'|}{|u'_{i,2j-1}|+|u_{i,2j}'|}\right)^{p-1}\right]\\
&\displaystyle\ge\sum_{i=0}^N\left[2^p(a_{i+1}-a_i)M_j\left(\frac{1}{M_j}\sum_{j=1}^{M_j} \frac {|u'_{i,2j-1}|+|u_{i,2j}'|}{|u'_{i,2j-1}u_{i,2j}'|}\right)^{1-p}\right]\\
&\displaystyle\ge\sum_{i=0}^N\left[2^p(a_{i+1}-a_i)\left(\sum_{j=1}^{2M_j} |u'_{i,j}|^{-1}\right)^{1-p}\right]\\
&\displaystyle=\int_{0}^{\infty}\left\{\int_{\{u^\sharp=t\}}|u^\sharp{'}|^{p-1} d{\mathcal H}^0\right\}dt
=\int_\R|u^\sharp {'}(x)|^p dx
\end{align*}

Here we have used the fact that for $p \ge 2$
\begin{equation}\label{eq_convexity}
\frac{\left ||u'_{i,2j-1}|+\kappa t\right|^p}{|u'_{i,2j-1}|} 
+\frac{\left||u'_{i,2j}|-\kappa t\right|^p}{|u'_{i,2j}|}\ge2^p\left( \frac {|u'_{i,2j-1}u_{i,2j}'|}{|u'_{i,2j-1}|+|u_{i,2j}'|}\right)^{p-1}.
\end{equation}
Infact, differentiating with respect to $\kappa$, the left hand side of \eqref{eq_convexity} achieves the minimum when $$\kappa t=\frac{|u'_{i,2j-1}|^{1/(p-1)}|u'_{i,2j}|-|u'_{i,2j-1}||u'_{i,2j}|^{1/(p-1)}}{|u'_{i,2j-1}|^{1/(p-1)}+|u'_{i,2j}|^{1/(p-1)}}$$ hence
$$
\frac{\left||u'_{i,2j-1}|+\kappa t\right|^p}{|u'_{i,2j-1}|} 
+\frac{\left||u'_{i,2j}|-\kappa t\right|^p}{|u'_{i,2j}|}\ge \frac {(|u'_{i,2j-1}|+|u_{i,2j}'|)^p}{\left(|u'_{i,2j-1}|^{1/(p-1)}+|u_{i,2j}'|^{1/(p-1)}\right)^{p-1}}.
$$
Thereafter inequality \eqref{eq_convexity} becomes a trivial consequence of the convexity of the power $p-1$ and of the arithmetic-geometric mean inequality.

\end{proof}

Arguing exactly as in the proof of Proposition \ref{pr_polya}, we can easily generalize the result and state the following Corollary.
\begin{corollary}
Let $u$ be in $W^{1,p}(\R)$ ($p\ge 2$) a nonnegative compactly supported function.  Given any measurable function $f$ 
$$\int_\R|u'(x)+f(u(x))|^pdx\ge\int_\R|u^\sharp {'}(x)|^p dx.$$
\end{corollary}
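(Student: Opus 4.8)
The plan is to follow verbatim the scheme of the proof of Proposition \ref{pr_polya}, the only change being the interpretation of the quantity that is added to $u'$. As there, I would first reduce to the case in which $u$ is nonnegative (already in the hypotheses) and simple, i.e.\ continuous and piecewise linear, the general case following by approximation. I keep the same decomposition of $[0,\max u]$ into intervals $(a_i,a_{i+1})$ and the same sets $D_i$ and $X_{i,j}$, together with the sign convention $u'_{i,j}=(-1)^{j+1}|u'_{i,j}|$.

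The decisive observation is that on each level set $\{u=t\}$ the composite function $f(u(x))$ equals the single real number $f(t)$. Hence, writing the coarea formula exactly as in Proposition \ref{pr_polya}, one obtains
$$\int_\R|u'(x)+f(u(x))|^pdx\ge\int_0^\infty\left\{\int_{\{u=t\}}\frac{|u'+f(t)|^p}{|u'|}\,d\mathcal{H}^0\right\}dt=\sum_{i=0}^N\int_{a_i}^{a_{i+1}}\left(\sum_{j=1}^{2M_j}\frac{|u'_{i,j}+f(t)|^p}{|u'_{i,j}|}\right)dt,$$
so that the constant $f(t)$ now occupies exactly the slot previously held by $\kappa t$.

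Because inequality \eqref{eq_convexity} was established by minimizing its left-hand side over the free real parameter $\kappa t$, its right-hand side being precisely that minimum, the same inequality remains valid when $\kappa t$ is replaced by any real number, and in particular by $c=f(t)$. Applying it at each level $t$ to the paired contributions $j\mapsto(2j-1,2j)$, and then invoking in order the arithmetic-geometric mean inequality and the convexity of the power $p-1$ exactly as before, the entire chain of estimates carries over unchanged and terminates, through the equimeasurability of $u$ and $u^\sharp$, at $\int_\R|u^\sharp{'}|^p$.

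I expect the only point requiring care to be the measurability of $f$, and even this is mild: since $u$ is simple, each level set is finite and $f$ is evaluated solely at the scalar value $t$, so $f(t)$ is a well-defined real number for a.e.\ $t$ and the $t$-integration produced by the coarea formula is legitimate. No convexity, monotonicity, or regularity of $f$ is ever used. The structure of \eqref{eq_convexity} is robust under the substitution of $\kappa t$ by an arbitrary level-dependent constant, and this robustness is exactly what makes the generalization from the linear term $\kappa u$ to the general term $f(u)$ immediate.
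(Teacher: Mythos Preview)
Your proposal is correct and is precisely the argument the paper intends: it states that the corollary follows by ``arguing exactly as in the proof of Proposition~\ref{pr_polya}'', and your observation that $f(u(x))=f(t)$ on each level set $\{u=t\}$, together with the fact that \eqref{eq_convexity} was obtained by minimizing over the free real parameter $\kappa t$, is exactly the point that makes the substitution $\kappa t\mapsto f(t)$ harmless.
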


\begin{remark}
We emphasize that the previous inequality is by no means a straightforward consequence of the classical P\'olya-Szeg\"o principle.
Indeed the reader may wonder whether
\begin{equation}\label{eq_nontrue}
\int_\R|u'(x)+f(u(x))|^pdx\ge\int_\R|u^\sharp {'}(x)+f(u^\sharp(x))|^p dx,
\end{equation}
since it is trivially true when $p=2$.
In fact the last inequality would imply Proposition \ref{pr_polya} since, using the symmetry of $u^\sharp$
$$\int_\R|u^\sharp {'}(x)+f(u^\sharp(x))|^p dx=\int_{\R^+}|u^\sharp {'}(x)+f(u^\sharp(x))|^p+ |u^\sharp {'}(x)-f(u^\sharp(x))|^pdx\ge\int_\R|u^\sharp {'}(x)|^p.$$
However, even if \eqref{eq_nontrue} holds true for $p=2$, it fails for $p>2$.

For instance we fix $f$ identically equal to one and $p>2$. For any $0<\varepsilon<1$ we can define a continuous piecewise linear function $u$, which is compactly supported in $[-1/(1-\varepsilon),1]$ such that
$$u'(x)=\left[\begin{array}{ll}
1-\varepsilon&x\in[-1/(1-\varepsilon),0]\\\\
-1&x\in[0,1]
\end{array}\right.$$
A simple calculation yields
$$\displaystyle \int_\R|u'(x)+f(u(x))|^pdx=2^p(1-\varepsilon(p/2-1))+o(\varepsilon)$$
and
$$\int_\R|u^\sharp {'}(x)+f(u^\sharp(x))|^p dx=2^p(1-(\varepsilon/2)(p/2-1))+o(\varepsilon).$$
Therefore for some $\varepsilon$ small enough \eqref{eq_nontrue} is false.

\end{remark}

\section{Weighted Wirtinger inequality (proof of Proposition \ref{pr_wirtinger})}\label{sec_wirtinger}

This section is devoted to the proof of Proposition \ref{pr_wirtinger}. For the reader convenience we have split the claim in three Lemmata. 
\begin{lemma}\label{lem_kappa}
Let $f$ be a nonnegative log-concave function defined on $[0,L]$ and $p>1$.
Then there exists $\kappa\in\R$ such that 
\begin{equation}\label{eq_kappa}
\mathop{\inf_{u\in W^{1,p}(0,L)}}_{\int_0^L f|u|^{p-2}u=0} \frac{\displaystyle \int_0^L f(x)|u'(x)|^p dx}{\displaystyle\int_0^L f(x)|u(x)|^p dx}\ge \mathop{\min_{u\in W^{1,p}(0,L)}}_{\int_0^L e^{\kappa x}|u|^{p-2}u=0} \frac{\displaystyle \int_0^L e^{\kappa x}|u'(x)|^p dx}{\displaystyle\int_0^L e^{\kappa x}|u(x)|^p dx}
\end{equation}
\end{lemma}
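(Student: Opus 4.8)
The plan is to reduce the weighted eigenvalue on the left-hand side to two one-sided mixed problems and then compare each of them with the corresponding exponentially weighted problem. Write $\mu_f$ for the infimum on the left of \eqref{eq_kappa} and let $u$ be a minimizer. Standard facts from the one-dimensional $p$-Laplacian theory (existence, and the fact that the first nontrivial weighted Neumann eigenfunction is monotone with a single interior zero) give a point $x_0\in(0,L)$ with $u<0$ on $[0,x_0)$ and $u>0$ on $(x_0,L]$. Testing the Euler--Lagrange equation $-(f|u'|^{p-2}u')'=\mu_f\,f|u|^{p-2}u$ with $u$ separately on $I_-=[0,x_0]$ and $I_+=[x_0,L]$, and using $u(x_0)=0$ together with the Neumann condition at the outer endpoints, the boundary terms drop and one gets that both partial Rayleigh quotients equal $\mu_f$; that is, $\mu_f=\lambda^{DN}(I_\pm,f)$, the first eigenvalue of the problem with Dirichlet datum at $x_0$ and Neumann datum at the far endpoint.

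Next I would use log-concavity to pick $\kappa$. Writing $f=e^{g}$ with $g$ concave, let $\kappa$ be the slope of a supporting line of $g$ at $x_0$, and set $w(x)=f(x_0)e^{\kappa(x-x_0)}$. Then $w\ge f$ on $[0,L]$ with equality at $x_0$, and $h:=\log(w/f)$ satisfies $h'=\kappa-g'$, so $h$ is nonincreasing on $I_-$ and nondecreasing on $I_+$ (with minimum value $0$ at $x_0$). Since multiplying a weight by a positive constant does not change any Rayleigh quotient, comparing with $w$ is the same as comparing with $e^{\kappa x}$, i.e. with the right-hand side of \eqref{eq_kappa}.

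The heart of the argument is a correlation inequality on each side. On $I_+$ introduce the measure $d\nu=f|u|^p\,dx$ and the function $\phi=|u'/u|^p$, so that $\int_{I_+}f|u'|^p=\int_{I_+}\phi\,d\nu$ and $\int_{I_+}w|u'|^p=\int_{I_+}e^{h}\phi\,d\nu$, while $\int_{I_+}f|u|^p=\nu(I_+)$ and $\int_{I_+}w|u|^p=\int_{I_+}e^{h}\,d\nu$. The quantity $q=|u'/u|$ runs from $+\infty$ at $x_0^+$ to $0$ at the Neumann endpoint, and I will show $\phi=q^p$ is nonincreasing there, while $e^{h}$ is nondecreasing; since the two are oppositely monotone, Chebyshev's correlation (integral) inequality gives $\int_{I_+}e^{h}\phi\,d\nu\cdot\nu(I_+)\le\int_{I_+}\phi\,d\nu\int_{I_+}e^{h}\,d\nu$, i.e. $\tfrac{\int_{I_+}w|u'|^p}{\int_{I_+}w|u|^p}\le\tfrac{\int_{I_+}f|u'|^p}{\int_{I_+}f|u|^p}=\mu_f$. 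As $u$ is admissible for the mixed $w$-problem on $I_+$ (it vanishes at $x_0$), this yields $\lambda^{DN}(I_+,w)\le\mu_f$, and symmetrically $\lambda^{DN}(I_-,w)\le\mu_f$, with the monotonicity of $q$ reversed on $I_-$ and $e^{h}$ decreasing there. Finally I would assemble: for the full Neumann problem with weight $w$ the eigenfunction changes sign at some $y_0$, with $\mu_w=\lambda^{DN}([0,y_0],w)=\lambda^{DN}([y_0,L],w)$; since $s\mapsto\lambda^{DN}([0,s],w)$ decreases and $s\mapsto\lambda^{DN}([s,L],w)$ increases, for the actual split point $x_0$ one always has $\max\{\lambda^{DN}([0,x_0],w),\lambda^{DN}([x_0,L],w)\}\ge\mu_w$. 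Hence $\mu_f\ge\max\{\lambda^{DN}(I_-,w),\lambda^{DN}(I_+,w)\}\ge\mu_w=\mu_{e^{\kappa x}}$, which is exactly \eqref{eq_kappa}.

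The main obstacle is the monotonicity of the logarithmic derivative $q=u'/u$ (a Pr\"ufer-type variable), which is precisely where the interplay between $p>2$ and log-concavity must be exploited. For $p=2$ one checks that $q$ solves a Riccati equation $q'=-(q^2+g'q+\mu_f)$, and that at any interior critical point $q''=-g''q\ge0$ because $g$ is concave; thus every interior critical point is a local minimum, which is incompatible with the boundary behaviour ($q\to+\infty$ at $x_0$, $q\to0$ at the Neumann end) unless $q$ is monotone. The nonlinear analogue of this Riccati analysis for $p>2$ — showing that concavity of $\log f$ forces the nonlinear phase variable to be monotone — is the delicate step I expect to require the most care, and it is the point at which the hypotheses $p\ge2$ and log-concavity are genuinely used.
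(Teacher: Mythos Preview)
Your approach is genuinely different from the paper's. Both choose $\kappa$ the same way---as the slope of $\log f$ at the zero $x_0$ of the eigenfunction---but the paper then argues by contradiction via a Sturm-type comparison principle: assuming $\mu_f<\mu_{e^{\kappa x}}$, it compares the eigenfunctions $u_\lambda$ and $u_{\bar\lambda}$ on $[0,x_0]$ and on $[x_0,L]$ separately (using that $h'(x)\ge\kappa$ on the left and $h'(x)\le\kappa$ on the right) and derives contradictory inequalities on the location of the zero of $u_{\bar\lambda}$. There is no Chebyshev correlation step and no analysis of $u'/u$. Your route, by contrast, gives a direct pointwise-to-integral comparison of Rayleigh quotients on each half-interval, then reassembles via monotonicity of the mixed eigenvalue in the splitting point. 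Both are short once the key monotonicity fact is in hand; the paper's version hides the ODE work inside the words ``comparison principle,'' while yours makes it explicit.

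The obstacle you flag is not actually delicate, and it does not require $p\ge2$. On $I_+$, with $u>0$, $u'>0$ (which follows since $e^{g}|u'|^{p-2}u'$ is strictly decreasing there), the equation $(p-1)|u'|^{p-2}u''+g'|u'|^{p-2}u'+\mu_f\,u^{p-1}=0$ gives for $q=u'/u$ the Riccati-type identity
\[
q'=-q^2-\frac{g'}{p-1}\,q-\frac{\mu_f}{p-1}\,q^{2-p},
\]
valid for every $p>1$. Differentiating and evaluating at any interior critical point of $q$ yields $q''=-\dfrac{g''}{p-1}\,q\ge0$ by concavity of $g$, exactly as in your $p=2$ computation. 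Hence every interior critical point is a (weak) local minimum, which is incompatible with $q\to+\infty$ at $x_0$ and $q(L)=0$ unless $q$ is monotone; the borderline case $g''=0$ is handled by approximating $g$ with strictly concave functions. The symmetric argument on $I_-$ is identical. So your Chebyshev step goes through for all $p>1$, matching the hypothesis of the lemma; the restriction $p\ge2$ in the paper enters only later, in the P\'olya--Szeg\H{o} refinement, not here.
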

\begin{proof}
We assume that the function $f$ is smooth and bounded away from $0$ in $[0,L]$. The general case can be deduced by approximation arguments.\\
Under these assumptions the infimum on the left had side of \eqref{eq_kappa} is achieved by a function $u_\lambda$
belonging to $\left\{u\in W^{1,p}(0,L),\int_0^L f|u|^{p-2}u=0\right\}$, which is a solution to the following Neumann eigenvalue problem 
\begin{equation*}
\left\{\begin{array}{ll}
(-u'|u'|^{p-2})'=\lambda u|u|^{p-2}+h'(x)u'|u'|^{p-2} & x\in(0,L)\\\\
u'(0)=u'(L)=0.
\end{array}\right.
\end{equation*} 
Here $h(x)=\log f(x)$ is a smooth bounded concave function and $\lambda$ is the minimum of $\mathcal F$.

Standard arguments ensure that $u_\lambda$ vanishes in one and only one point namely $x_\lambda\in(0,L)$  and without loss of generality we may assume that ${u_\lambda}(L)<0<{u_\lambda}(0)$.

We claim that if $\kappa =h'(x_\lambda)$ then 

$$\lambda\ge \mathop{\min_{u\in W^{1,p}(0,L)}}_{\int_0^L e^{\kappa x}|u|^{p-2}u=0} \frac{\displaystyle \int_0^L e^{\kappa x}|u'(x)|^p dx}{\displaystyle\int_0^L e^{\kappa x}|u(x)|^p dx} \equiv\bar\lambda.$$

Arguing by contradiction we assume that $\lambda< \bar \lambda$. Therefore there exists a function $u_{\bar \lambda}$ solution to
\begin{equation*}
\left\{\begin{array}{ll}
(-u'|u'|^{p-2})'=\bar\lambda  u|u|^{p-2}+h'(x_\lambda) u'|u'|^{p-2} & x\in(0,L)\\\\
 u'(0)= u'(L)=0
\end{array}\right.
\end{equation*} 

Standard arguments ensures that $u_{\bar\lambda}$ vanishes in one and only one point namely $x_{\bar \lambda}\in(0,L)$, and we may always assume that $u_{\bar\lambda}(L) < 0 < u_{\bar \lambda}(0)$. Since $h'$ is non increasing in $[0,L]$, a strightforward consequence of the comparison principle applied to $u_\lambda$ and $u_{\bar\lambda}$ on the interval $[0,x_\lambda]$ enforces $x_{\bar \lambda}< x_\lambda$. On the other hand the comparison principle applied to $u_\lambda$ and $u_{\bar\lambda}$ on the interval $[x_\lambda,L]$ enforces $x_{\bar \lambda}> x_\lambda$ and eventually a contradiction arises.
\end{proof}

In view of Lemma \ref{lem_kappa} we restrict our investigation to (log-concave functions) $f$ of exponetial type (i.e. $f(x)=e^{\kappa x}$ for some $\kappa \in \R$).
For such a class of functional we are able to prove that the first nontrivial Neumann eigenvalue equals the first Dirichlet eigenvalue, namely we prove

\begin{lemma}\label{lem_allkappa}
For all $\kappa\in\R$ and $p>1$
\begin{equation}\label{eq_ND}
\mathop{\min_{u\in W^{1,p}(0,L)}}_{\int_0^L e^{\kappa x}|u|^{p-2}u=0} \frac{\displaystyle \int_0^L e^{\kappa x}|u'(x)|^p dx}{\displaystyle\int_0^L e^{\kappa x}|u(x)|^p dx}=\min_{u\in W_0^{1,p}(0,L)} \frac{\displaystyle\int_0^L e^{\kappa x}|u'(x)|^p dx}{\displaystyle\int_0^L e^{\kappa x}|u(x)|^p dx}
\end{equation}
\end{lemma}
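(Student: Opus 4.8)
The plan is to exploit the nodal structure of the extremal functions on the two sides of \eqref{eq_ND} together with the translation invariance of the Rayleigh quotient attached to the exponential weight. Write $w(x)=e^{\kappa x}$ and observe that for every $c$ the substitution $x\mapsto x-c$ multiplies both integrals in any such quotient by the same factor $e^{\kappa c}$; hence the quotient, and thus every eigenvalue below, depends on an interval only through its length. For $\ell\in(0,L)$ let $m_-(\ell)$ (respectively $m_+(\ell)$) denote the first eigenvalue of the weighted $p$-Laplacian on an interval of length $\ell$ carrying a homogeneous Neumann condition at the left (respectively right) endpoint and a homogeneous Dirichlet condition at the other endpoint. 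Denote by $\mu_N$ and $\mu_D$ the left- and right-hand sides of \eqref{eq_ND}.

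First I would record the qualitative facts, all obtained by the same standard arguments already used in the proof of Lemma \ref{lem_kappa}. The minimizer $u$ of $\mu_N$ is a genuine eigenfunction that changes sign exactly once, at a point $x_0\in(0,L)$, with $u'(0)=u'(L)=0$; restricting $u$ to $[0,x_0]$ and to $[x_0,L]$ and translating each piece to the origin exhibits, through the invariance above, the identity $\mu_N=m_-(x_0)=m_+(L-x_0)$. In the same way the first Dirichlet eigenfunction is single-signed with a unique interior critical point $y_0$, which gives $\mu_D=m_+(y_0)=m_-(L-y_0)$. I would also establish that $\ell\mapsto m_\pm(\ell)$ is continuous and strictly decreasing, with $m_\pm(\ell)\to+\infty$ as $\ell\to0^+$: monotonicity comes from extending a competitor by zero across its Dirichlet endpoint, which leaves the quotient unchanged; strictness follows from unique continuation for the \emph{mixed} problem, since a nontrivial eigenfunction cannot vanish to first order at an interior point; and a scaling argument (the weight tending to $1$ on short intervals) yields the blow-up.

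Granting these, the conclusion is purely combinatorial. The functions $g(\ell):=m_-(\ell)-m_+(L-\ell)$ and $h(\ell):=m_+(\ell)-m_-(L-\ell)$ are each strictly decreasing and change sign on $(0,L)$, hence possess unique zeros, namely $x_0$ and $y_0$ respectively. Since $h(\ell)=-g(L-\ell)$ identically, the zero of $h$ is the reflection of the zero of $g$, that is $y_0=L-x_0$. Therefore
\[
\mu_D=m_+(y_0)=m_+(L-x_0)=\mu_N,
\]
which is exactly \eqref{eq_ND}.

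The variational bookkeeping is routine; the genuine work, and the main obstacle, is the qualitative ODE analysis for $p\neq2$: proving that the extremal functions have the asserted single nodal point and single critical point, and that the mixed eigenvalues are strictly monotone in the length. For $p=2$ all of this is immediate from linear Sturm--Liouville theory, but for the $p$-Laplacian I would argue directly on the first-order system satisfied by the pair $(u,|u'|^{p-2}u')$, using a comparison and shooting argument in the spirit of the proof of Lemma \ref{lem_kappa}, combined with unique continuation to exclude degenerate interior zeros.
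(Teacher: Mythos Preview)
Your argument is correct, and it rests on the same key observation as the paper's---that for the exponential weight $e^{\kappa x}$ the Rayleigh quotient is translation invariant---but the route you take is genuinely different. The paper does not introduce the mixed eigenvalues $m_\pm(\ell)$ at all. Instead it argues directly: from the Neumann minimizer $v$ with unique zero $x_0$ it manufactures a Dirichlet eigenfunction $w$ by cutting $v$ at $x_0$, translating the two pieces so that the former Neumann endpoints meet in the interior, and rescaling each piece so that the values match there. Because $v'(0)=v'(L)=0$, the glued function is $C^1$ at the junction; because the equation has constant coefficient $\kappa$, each translated piece still solves it; hence $w$ is a Dirichlet eigenfunction with eigenvalue $\mu_N$, and since $w\ge0$ the maximum principle forces $\mu_N=\mu_D$.

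The paper's construction is shorter and needs almost nothing beyond the single nodal point of $v$ and the positivity characterisation of the first Dirichlet eigenfunction. Your approach trades this explicit gluing for a clean combinatorial identity $h(\ell)=-g(L-\ell)$, but pays for it with extra infrastructure: you must establish continuity, strict monotonicity, and blow-up of the mixed eigenvalues $m_\pm$, and you must also know that the restrictions of the Neumann (resp.\ Dirichlet) eigenfunction to their nodal pieces are \emph{first} mixed eigenfunctions, i.e.\ that a one-signed mixed eigenfunction is necessarily principal. None of this is hard, but it is more than the paper needs.
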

\begin{proof}
If $v$ minimizes the left hand side of \eqref{eq_ND} then it solves
\begin{equation*}
\left\{\begin{array}{ll}
(-u'|u'|^{p-2})'=\mu u|u|^{p-2}+\kappa u'|u'|^{p-2} & x\in(0,L)\\\\
u'(0)=u'(L)=0,
\end{array}\right.
\end{equation*} 
where $$\mu=\mathop{\min_{u\in W^{1,p}(0,L)}}_{\int_0^L e^{\kappa x}|u|^{p-2}u=0} \frac{\displaystyle \int_0^L e^{\kappa x}|u'(x)|^p dx}{\displaystyle\int_0^L e^{\kappa x}|u(x)|^p dx}.$$
We denote by $x_0\in[0,L]$ the unique zero of $v$ and by
$$w(x)=\left[\begin{array}{ll}
\displaystyle
\left|\frac{v(x+x_0)}{v(L)}\right| & x\in [0,L-x_0]\\\\
\left|\dfrac{v(x-L-x_0)}{v(0)}\right| & x\in [L-x_0,L].
\end{array}\right.$$ 

We observe that any minimizer of the right hand side of \eqref{eq_ND} is a solution to
\begin{equation*}
\left\{\begin{array}{ll}
(-u'|u'|^{p-2})'=\nu u|u|^{p-2}+\kappa u'|u'|^{p-2} & x\in(0,L)\\\\
u(0)=u(L)=0,
\end{array}\right.
\end{equation*} 
where $$\nu=\min_{u\in W_0^{1,p}(0,L)} \frac{\displaystyle\int_0^L e^{\kappa x}|u'(x)|^p dx}{\displaystyle\int_0^L e^{\kappa x}|u(x)|^p dx}.$$

It is trivial to check that $w$ is a solution of 
\begin{equation*}
\left\{\begin{array}{ll}
(-u'|u'|^{p-2})'=\mu u|u|^{p-2}+\kappa u'|u'|^{p-2} & x\in(0,L)\\\\
u(0)=u(L)=0.
\end{array}\right.
\end{equation*} 

Hence $\mu\ge\nu$.
On the other hand $w$ is an eigenfunction having constant sign, and we can easily deduce by standard argument (for instance by using the maximum principle) that $\mu\le\nu$. Therefore we have $\mu=\nu$.
\end{proof}

\begin{lemma}\label{lem_fund}
Let $p\ge2$ and let $u$ be in $W_0^{1,p}(0,L)$.  Then there exists a nonnegative function $\tilde u \in W_0^{1,p}(0,L)$ such that 
\begin{align*}
&\int_0^L e^{\kappa x}|u'(x)|^p dx \ge \int_0^L |\tilde u'(x)|^p dx \\
&\int_0^L e^{\kappa x}|u(x)|^p dx = \int_0^L |\tilde u(x)|^p dx
\end{align*}
for all $\kappa\in\R$.
\end{lemma}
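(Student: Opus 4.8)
The plan is to linearize away the exponential weight and then invoke the refined P\'olya--Szeg\"o inequality of Proposition \ref{pr_polya}. First I would set $v(x)=e^{\kappa x/p}u(x)$ on $(0,L)$ and extend $v$ by zero to all of $\R$; since $u\in W_0^{1,p}(0,L)$ vanishes at the endpoints and the weight is smooth and bounded, $v$ is a compactly supported function of $W^{1,p}(\R)$. The reason for this particular substitution is the pointwise identity $u'(x)=e^{-\kappa x/p}\bigl(v'(x)-\frac{\kappa}{p}v(x)\bigr)$, which turns the two weighted integrals into
\begin{equation*}
\int_0^L e^{\kappa x}|u'(x)|^p\,dx=\int_\R\Bigl|v'(x)-\frac{\kappa}{p}v(x)\Bigr|^p\,dx,\qquad \int_0^L e^{\kappa x}|u(x)|^p\,dx=\int_\R|v(x)|^p\,dx.
\end{equation*}

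Next I would apply Proposition \ref{pr_polya} to $v$, reading the constant there as $-\kappa/p$, to obtain
\begin{equation*}
\int_\R\Bigl|v'(x)-\frac{\kappa}{p}v(x)\Bigr|^p\,dx\ge\int_\R|v^\sharp{}'(x)|^p\,dx,
\end{equation*}
while equimeasurability of $v$ and $v^\sharp$ gives $\int_\R|v|^p\,dx=\int_\R|v^\sharp|^p\,dx$. The rearrangement $v^\sharp$ is nonnegative, even, and supported in an interval centered at the origin whose length $\mathcal{L}^1\{v\ne0\}$ does not exceed $L$, because $v$ vanishes outside $[0,L]$. I would then let $\tilde u$ be the translate of $v^\sharp$ whose support is contained in $(0,L)$; by translation invariance $\tilde u\in W_0^{1,p}(0,L)$ is nonnegative and satisfies $\int_0^L|\tilde u'|^p\,dx=\int_\R|v^\sharp{}'|^p\,dx$ and $\int_0^L|\tilde u|^p\,dx=\int_\R|v^\sharp|^p\,dx$. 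Combining these equalities with the displayed inequality yields precisely the two relations claimed in the Lemma, with $\tilde u$ depending on the given $\kappa$.

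The decisive step is the appeal to Proposition \ref{pr_polya} in place of the classical P\'olya--Szeg\"o principle. After the substitution one must bound $\int_\R|v'-\frac{\kappa}{p}v|^p$ from below by $\int_\R|v^\sharp{}'|^p$, and the zeroth-order term cannot simply be discarded: for $p>2$ the quantity $\int_\R|v'-\frac{\kappa}{p}v|^p$ need not dominate $\int_\R|v'|^p$ (for $p=2$ the two differ only by the nonnegative term $(\kappa/2)^2\int_\R v^2\,dx$, which is why the classical argument suffices in that case alone), so the elementary chain $\int|v'|^p\ge\int|v^\sharp{}'|^p$ is unavailable here. It is the sharpened inequality, valid for every real constant and every $p\ge2$, that closes the gap, and this is the sole place where the hypothesis $p\ge2$ enters. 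The only remaining bookkeeping is to confirm that $v^\sharp$ fits inside an interval of length $L$ so that its translate lies in $W_0^{1,p}(0,L)$, which is immediate from the support of $v$.
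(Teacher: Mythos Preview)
Your argument is correct and follows essentially the same route as the paper: introduce $v(x)=e^{\kappa x/p}u(x)$ to remove the weight, apply Proposition~\ref{pr_polya} to bound $\int|v'-\tfrac{\kappa}{p}v|^p$ below by $\int|v^\sharp{}'|^p$, and translate $v^\sharp$ back into $(0,L)$. Your added remarks on why the classical P\'olya--Szeg\H{o} inequality is insufficient for $p>2$, and on the support of $v^\sharp$ fitting in an interval of length at most $L$, make explicit points that the paper leaves implicit.
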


\begin{proof}
Setting  $v(x)= u(x) e^{\frac{\kappa}{p} x}$ we have
$$
\int_0^L e^{\kappa x} |u'(x)|^p dx = \int_0^L \left|v(x)'-\frac{k}{p} v(x)\right |^p dx
$$
and
$$
\int_0^L e^{\kappa x} |u(x)|^p dx= \int_0^L |v(x)|^p dx.
$$
The claim can be easily deduced from Proposition \ref{pr_polya} once we chose $\tilde u(x+L/2)=v^\sharp(x)\equiv\left(e^{\frac{\kappa}{p} x}|u(x)|\right)^\sharp$.
\end{proof}

\section{Reduction to one dimensional case (proof of Theorem \ref{teo_main})}\label{sec_red}
The aim of this section is to prove that Theorem \ref{teo_main} can be deduced from Proposition \ref{pr_wirtinger}. As we already mentioned the idea is based on a slicing metod worked out in \cite{pawe} and proved in a slightly different way also in \cite{be,acdur}. We outline the technique for the seek of completeness.

\begin{lemma}
\label{division}
Let $\Omega$ be a convex set in $\R^n$ having diameter $d$, and let $u$ be any function such that $\int_{\Omega_i}|u(x)|^{p-2}u(x)dx=0$. Then, for all positive $\varepsilon$, there exists a decomposition of the set $\Omega$ in mutually disjoint convex sets $\Omega_i$ ($i=1,...,k$) such that
$$\bigcup_{i=1}^k\bar\Omega_i=\bar\Omega$$
$$\int_{\Omega_i}|u(x)|^{p-2}u(x)dx=0$$
and for each $i$ there exists a rectangular system of coordinates such that
$$\Omega_i\subseteq\{(x_1,...,x_n)\in\R^n: 0\le x_1\le d_i, |x_\ell|\le\varepsilon, \ell=2,...,n \}\quad (d_i\le d, i=1,...,k)$$
\end{lemma}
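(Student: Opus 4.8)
The plan is to reduce the statement to a single bisection step which is then iterated. Write $g(x)=|u(x)|^{p-2}u(x)$, so that the hypothesis reads $\int_\Omega g\,dx=0$ and the requirement on each piece is $\int_{\Omega_i}g\,dx=0$. Convexity and the diameter bound are automatic: every set produced will be the intersection of $\Omega$ with finitely many half-spaces, hence convex and contained in $\Omega$, so $\diam\Omega_i\le\diam\Omega=d$ and $d_i\le d$. Thus the whole content splits into two parts: (i) a convex piece can always be divided into two convex pieces \emph{without destroying the zero-mean constraint}, and (ii) these divisions can be organised so that after finitely many of them every surviving piece is thin in $n-1$ orthogonal directions.

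For part (i), the core \emph{constraint-preserving bisection}, fix a convex body $C\subseteq\Omega$ with $\int_C g=0$ and an $(n-2)$-dimensional affine subspace $A$ meeting the interior of $C$. The hyperplanes containing $A$ form a pencil parametrised by an angle $\theta\in[0,\pi]$, each cutting $C$ into two nonempty convex pieces $C^+_\theta,C^-_\theta$, and rotating by $\pi$ interchanges the two sides. Setting $\psi(\theta)=\int_{C^+_\theta}g\,dx$, the map $\theta\mapsto\psi(\theta)$ is continuous (the region varies continuously and $g\in L^1$) and satisfies $\psi(\pi)=\int_C g-\psi(0)=-\psi(0)$. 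By the intermediate value theorem there is $\theta^\ast$ with $\psi(\theta^\ast)=0$, and then $\int_{C^-_{\theta^\ast}}g=-\psi(\theta^\ast)=0$ as well, producing two convex pieces each with vanishing $g$-mean.

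For part (ii), I would iterate the bisection, at each stage applying it to a piece that is still too wide in some transverse direction $e$ and restricting the pencil above to hinges $A$ orthogonal to $e$, so that the admissible cut separates $C$ transversally to $e$. Since the cut passes through the interior, each child has strictly smaller $e$-width than its parent. Iterating over the $n-1$ directions orthogonal to a fixed longest axis, and using $\diam\Omega_i\le d$ to cap the sizes involved, one reaches a stage where every piece has transverse width at most $2\varepsilon$. For each resulting needle-like convex piece one then selects the adapted rectangular frame (longest direction as $x_1$, the thin directions as $x_2,\dots,x_n$, origin suitably translated), so that $\Omega_i\subseteq\{0\le x_1\le d_i,\ |x_\ell|\le\varepsilon,\ \ell=2,\dots,n\}$.

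The main obstacle is the reconciliation in (ii): the zero-mean constraint dictates, through the intermediate value theorem, \emph{where} and at \emph{what angle} the admissible cut falls, whereas the geometric goal wants cuts transverse to the direction being thinned and splitting it roughly in half. Since the zero of $\psi$ may land near an end of $C$, a single cut need not reduce the width of \emph{both} children by a fixed fraction, and one must organise the process—for instance by always operating on the currently widest piece, or by combining the transversality of the cut with a width/measure bookkeeping—so that after a number of constraint-preserving cuts bounded in terms of $d/\varepsilon$ and $n$ all pieces become thin. Checking this uniform progress, together with the fact that the chosen restricted pencils always admit a zero of $\psi$, is the crux of the argument; the passage from the simple, smooth setting to a general $g$ is then a routine approximation, the continuity of $\psi$ being stable under $L^1$-approximation of $g$.
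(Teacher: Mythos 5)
Your part (i) is sound as far as it goes, but the proposal has a genuine gap, and it is exactly the one you flag at the end as ``the crux.'' With a one-parameter pencil of hyperplanes through a fixed hinge $A$, the intermediate value theorem hands you a zero-mean cut but no control over where it falls, and ``each child has strictly smaller $e$-width'' does not give termination after finitely many cuts, let alone a bound in terms of $d/\varepsilon$. Worse, the two requirements conflict: to keep the antipodal identity $\psi(\pi)=-\psi(0)$ you must allow the whole pencil $\theta\in[0,\pi]$, which contains angles at which the hyperplane is (nearly) parallel to the direction $e$ you are trying to thin; the zero of $\psi$ may land there, producing a cut that reduces the $e$-width of one child by an arbitrarily small amount. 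You cannot restrict the pencil to transverse cuts without losing the sign change that drives the intermediate value argument.

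The paper resolves this by spending a second degree of freedom on a second, \emph{quantitative} constraint. It works with the two-parameter family of hyperplanes $ax_1+bx_2=c$ orthogonal to a fixed $2$-plane $\Pi_{1,2}$: for each direction $(a,b)$ the offset $c$ is chosen so that the two pieces have projections onto $\Pi_{1,2}$ of \emph{equal area}, and the intermediate value theorem is then applied in the angular parameter to also annihilate $\int u|u|^{p-2}$ on each side (rotating the direction by $\pi$ swaps the two sides, giving the sign change). Every admissible cut therefore halves the projected area, so after $N$ generations each piece has projected area at most $2^{-N}$ times the original, hence below $\varepsilon^2/2$ for $N$ large; the elementary inequality $w\le\sqrt{2A}$ for convex planar sets then makes each piece $\varepsilon$-thin in \emph{some} direction of $\Pi_{1,2}$ (a direction that may differ from piece to piece, which is why the paper rotates coordinates per piece before recursing on the next pair of axes). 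This area-bisection bookkeeping is the ingredient missing from your scheme; without it, or some substitute monotone quantity that each constraint-preserving cut provably halves, part (ii) of your argument does not close.
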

\begin{proof}
Among all the $n-1$ hyperplanes of the form $a x_1+ b x_2 = c$, orthogonal to the 2-plane $\Pi_{1,2}$ generated by the directions $x_1$ and $x_2$, by continuity there exists certainly one that divides $\Omega$ into two nonempty subsets on each of which the integral of $u|u|^{p-2}$ is zero and their projections on $\Pi_{1,2}$ have the same area. We go on subdividing recursively in the same way both subset and eventually we stop when all the subdomains $\Omega^{(1)}_j$ ($j=1,...,2^{N_1}$) have projections with area smaller then $\varepsilon^2/2.$ Since the width $w$ of a planar set of area $A$ is bounded by the trivial inequality $w\le\sqrt{2A}$, each subdomain $\Omega^{(1)}_j$ can be bounded by two parallel $n-1$ hyperplanes of the form $a x_1+ b x_2 = c$ whose distance is less than $\varepsilon$. If $n=2$ the proof is completed, provided that we understand $\Pi_{1,2}$ as $\R^2$, the projection of $\Omega$ on $\Pi_{1,2}$ as $\Omega$ itself, and the $n-1$ orthogonal hyperplanes as lines.
If $n>2$ for any given $\Omega^{(1)}_j$ we can consider a rectangular system of coordinates such that the normal to the above $n-1$ hyperplanes which bound the set, points in the direction $x_n$. Then we can repeat the previous arguments and subdivide the set $\Omega^{(1)}_i$ in subsets $\Omega^{(2)}_j$ ($j=1,...,2^{N_2}$) on each of which the integral of $u|u|^{p-2}$ is zero and their projections on $\Pi_{1,2}$ have the same area which is less then $\varepsilon^2/2$. Therefore, any given $\Omega^{(2)}_j$, can be bounded by two $n-1$ hyperplanes of the form $a x_1+ b x_2 = c$ whose distance is less than $\varepsilon$. If $n=3$ the proof is over. If $n>3$ we can go on considering $\Omega^{(2)}_j$ and rotating the coordinate system such that the normal to the above $n-1$ hyperplanes which bound $\Omega^{(2)}_j$, points in the direction $x_{n-1}$ and such that the rotation keeps the $x_n$ direction unchanged. The procedure ends after $n-1$ iterations, at that point we have perfomed $n-1$ rotations of the coordinate system and all the directions have been fixed. Up to a translation, in the resulting coordinate system $$\Omega^{(n-1)}_j\subseteq\{(x_1,...,x_n)\in\R^n: 0\le x_1\le d_i, |x_\ell|\le\varepsilon, \ell=2,...,n \}$$
\end{proof}

\begin{proof}[Proof of Theorem \ref{teo_main}]
From \eqref{eq_quoz}, using the density of smooth functions in Sobolev spaces it will be enough to prove that 
$$ \dfrac{\displaystyle\int_\Omega |Du|^p}{\displaystyle\int_\Omega |u|^p}
\ge \left(\frac{\pi_p}{d}\right)^p $$
when $u$ is a smooth function with bounded second derivatives and $\int_{\Omega}|u(x)|^{p-2}u(x)dx=0$.

Let $u$ be any such function. According to Lemma \ref{division} we fix $\varepsilon$ and we decompose the set $\Omega$ in convex domains $\Omega_i$ ($i=1,...,k$). We use the notation of Lemma \ref{division} and we focus on one of the subdomains $\Omega_i$ and 
fix the reference system such that 
$$\Omega_i\subseteq\{(x_1,...,x_n)\in\R^n: 0\le x_1\le d_i, |x_\ell|\le\varepsilon, \ell=2,...,n \}.$$
For $t\in[0,D_i]$ we denote by $f(t)$ the $n-1$ volume of the intersection of $\Omega_i$ with the $n-1$ hyperplane $x_1=t$. Since $\Omega_i$  is convex, then by Brunn-Minkowski inequality (see \cite{gar}) $f$ is a log concave function in $[0,d_i]$.

Therefore, if for any $t\in[0,d_i]$ we denote by $v(t)=u(t,0,...,0)$, and by $C$ any constant depending just on $||u||_{C^{2}}$,  we have

\begin{equation}
\label{uno}
\left | \int_{\Omega_{i}} \left|\frac{\partial u}{\partial {x_1}} \right|^p \> dx-\int_0^{d_i} f(t) |v'(t)|^p \> dt\right| \leq C |\Omega_i| \varepsilon,
\end{equation}

\begin{equation}
\label{due}
\left | \int_{\Omega_{i}} \left|u \right|^p \> dx-\int_0^{d_i} f(t) |v(t)|^p \> dt\right| \leq C |\Omega_i| \varepsilon
\end{equation}

and

\begin{equation}
\label{tre}
\left | \int_0^{d_i} f(t) |v(t)|^{p-2}v(t) \> dt\right| \leq C |\Omega_i| \varepsilon.
\end{equation}

Since $d_i\le d$, applying Proposition \ref{pr_wirtinger} we have 

$$
\int_{\Omega_i} | D u|^p \>dx\ge  \int_{\Omega_{i}} \left|\frac{\partial u}{\partial {x_1}} \right|^p \> dx \ge
\left(\frac{\pi_p}{d}\right)^p \int_{\Omega_{i}} \left|u(x) \right|^p \> dx + C \varepsilon  |\Omega_i|.
$$

We sum up the last inequality over all the subdomains ($i=1,...,k$)

$$
\int_{\Omega} | D u|^p \>dx\ge \left(\frac{\pi_p}{d}\right)^p \int_{\Omega} \left|u(x) \right|^p \> dx + C \varepsilon  |\Omega|.
$$

and we let $\varepsilon\to 0$ to obtain the desired inequality.

\end{proof}

\end{document}